\documentclass{jgcc} 
 \pdfoutput=1
\usepackage{lastpage}
\jgccdoi{18}{2}{3}{17698}
\jgccheading{}{\pageref{LastPage}}{}{}{Mar.~11,~2026}{Jul.~1,~2026}{}

\keywords{$p$-groups, computational methods}

\usepackage{hyperref}
\usepackage{orcidlink}
\theoremstyle{plain} 

\newcommand{\SG}{\mathrm{SG}}
\newcommand{\ID}{\mathrm{ID}}
\newcommand{\ra}{\rightarrow}

\newcommand{\ms}{\mapsto}

\newcommand{\subquot}{sub-quot }
\newcommand{\N}{\mathbb N}
\newcommand{\F}{\mathbb F}
\newcommand{\FC}{\mathcal F}
\newcommand{\SC}{\mathcal S}
\renewcommand{\L}{\mathcal L}
\newcommand{\Z}{\mathbb Z}

\begin{document}
\title[Group identification of $p$-groups]
  {The group identification problem \\
   for $p$-groups of small order}
	
\author[B. Eick]{Bettina Eick\, \orcidlink{0000-0003-2884-6545} }	
\address{Technische Universität Braunschweig, Germany}	
\email{b.eick@tu-braunschweig.de}  
\thanks{\textit{2020 Mathematics Subject Classification.} 
     Primary 20D15, 20-08; Secondary 20-11.} 

\author[H. Schanze]{Henrik Schanze\, \orcidlink{0000-0001-7621-7300} }	
\address{Technische Universität Braunschweig, Germany}	
\email{henrik.schanze@tu-braunschweig.de}  
		
		
\begin{abstract}
We investigate which group-theoretic invariants are powerful in 
distinguishing among non-isomorphic $p$-groups. Based on this, we 
devise an effective algorithm to solve the group identification 
problem for the $10\,494\,213$ groups of order $2^9$. We exhibit 
$56$ pairs of groups of order $2^9$ which are difficult to distinguish 
by invariants.
\end{abstract}
		
\maketitle
	
\section*{Introduction}
\label{intro}
	
Let $n \in \N$ and let $\L_n = (L_1, \ldots, L_{t(n)})$ be a complete and
irredundant list of all groups of order $n$, containing one representative 
for each isomorphism type. Given a group $G$ of order $n$, the {\em group 
identification problem} asks for the unique index $i$ so that $L_i \cong G$.
Solving this problem yields a test for isomorphism of two groups of order 
$n$, if a list $\L_n$ is given. Such lists are available in 
the SmallGroups Library \cite{BEO02, SmallGrp} for all $n \leq 2000$, 
excluding those of order $1024$.
	
For all orders $n \leq 2000$, excluding those of order $512, 1024$ and 
$1536$, the SmallGroups Library provides an effective solution for the 
group identification problem. Given a group $G$, the algorithm iteratively 
evaluates invariants of $G$ and with each evaluation it narrows down the 
list of possible candidates for $L_i \in \L_n$ with $L_i \cong G$ until this 
list has length one. We refer to Besche \& Eick \cite{BE99} for a description.
	
Our central aim here is to extend the solution of the group identification 
problem to groups of order $512$. The $10 \; 494 \; 213$ groups of this 
order are available in the SmallGroups Library, see also O'Brien 
\cite{OBr90} and  Eick \& O'Brien \cite{EOB99} for background on their
determination. Hence our aim is to find invariants of groups that are 
fast to evaluate and effective in distinguishing among the groups of 
order $512$. 
	
As a first step we explore invariants related to proper subgroups and 
quotients. We introduce the notion of {\em sibling tuples} for 
collections of pairwise non-isomorphic groups that are not distinguished 
by our subgroups and quotient invariants. We then consider invariants 
related to character theory and power maps. These split the sibling tuples
into clusters; we introduce the notion of {\em strong sibling tuples} for 
each such cluster. We observe that there are $56$ strong sibling $2$-tuples 
among the $10\,494\,213$ groups of order $2^9$.
	
We combined the ideas behind (strong) sibling tuples with the ideas of 
group identification for the groups of order dividing $256$ and obtain 
a new group identification algorithm for the groups of order $512$. Its 
implementation is available in the IdPGroup package \cite{IdPGroup} of
GAP \cite{Gap}.
	
This paper is structured as follows. Section \ref{invs} introduces our
invariants and evaluates them for $p$-groups of small order.  Section 
\ref{2groups} describes the groups of order dividing $2^9$ that are not 
distinguishable by these invariants. Section \ref{idgroup} introduces 
the algorithm to solve the group identification problem for groups of 
order $512$.  Section \ref{quest} lists open problems.
	
\section*{Acknowledgment}
\noindent 
The authors thank Heiko Dietrich for comments. Part of this work was 
done while the authors visited Monash University (Melbourne, Australia)
supported by a DAAD grant.
		
\section{Invariants for finite groups}
\label{invs}
	
In this section we introduce our invariants for finite $p$-groups and discuss 
their application to $p$-groups of small order. Throughout, we denote with
$\SG(o,i)$ the $i$th group of order $o$ in the SmallGroups Library. If the
group identification for groups of order $o$ is available in the 
SmallGroups Library, then we say that {\em $\ID$ is available} for the
order $o$; we write $\ID(G)$ for the pair $(o,i)$ with $G \cong \SG(o,i)$.
	
\subsection{Subgroups and quotients}
\label{siblings}

For a group $G$, let $\SC(G) = \{ U \leq G \mid 1 \neq U \neq G\}$ denote the 
set of all proper subgroups of $G$ and let $\FC(G) = \{ G/N \mid 1 \neq N 
\lhd G \}$ be the set of all proper quotients of $G$. Two groups $G$ and $H$ 
are {\em \subquot equivalent} if there exist bijections $\varphi : \SC(G) 
\ra \SC(H)$ and $\rho : \FC(G) \ra \FC(H)$ so that
\bigskip
	
\begin{enumerate}
\item[\rm (a)]
$U \cong \varphi(U)$ for each $U \in \SC(G)$ and $Q \cong \rho(Q)$ for
each $Q \in \FC(G)$; 
\item[\rm (b)] 
$\varphi$ preserves conjugacy; that is, $U$ is conjugate to $V$ in $G$
if and only if $\varphi(U)$ is conjugate to $\varphi(V)$ in $H$.
\item[\rm (c)] 
$\varphi$ maps the Frattini subgroup of $G$ and the proper subgroups of the 
lower and upper central series in $\SC(G)$ to the corresponding subgroups 
in $\SC(H)$.
\end{enumerate}
\bigskip
	
\noindent 
Our next aim is a practical algorithm to determine the clusters of \subquot
equivalent groups in a given list of $p$-groups. For this purpose we determine
a {\em fingerprint} $F(G)$ for a $p$-group $G$ so that two groups $G$ and $H$
are \subquot equivalent if and only if $F(G) = F(H)$ holds. 

An algorithm to determine the fingerprint $F(G)$ is exhibited in the 
following. This takes as input a group 
$G$ with $|G| = p^n$ and it assumes that $\ID$ is available for all groups of 
order dividing $p^{n-1}$.
\newpage
	
\begin{algo}{\bf Fingerprint:}
\begin{enumerate}
\item[(1)]
Compute the conjugacy classes $U_1^G, \ldots, U_r^G$ of proper subgroups 
of $G$; see the survey \cite{Hul12} or Section 10.4 in \cite{HEO} for
background.
\item[(2)]
Determine $( (\ID(U_i), |U_i^G|) \mid 1 \leq i \leq r)$ and sort this list.
\item[(3)]
Extract the non-trivial normal subgroups $N_1, \ldots, N_s$ from the list
determined in (1).
\item[(4)]
Determine $( \ID(G/N_i) \mid 1 \leq i \leq s )$ and sort this list.
\item[(5)]
Determine $( \ID(H) \mid H \in S(G))$, where $S(G)$ consists of the Fratttini
subgroup and the proper subgroups appearing as terms of the lower and upper
central series of $G$.
\item[(6)]
Return the concatenation of the results in (2), (4), (5).
\end{enumerate}
\end{algo} 
\bigskip
	
\noindent 
A {\em sibling $l$-tuple} is a list of $l$ pairwise non-isomorphic groups
that are pairwise \subquot equivalent; that is, any two groups in the list
have the same fingerprint. We also write pair for $2$-tuple.
	
\begin{thm}
\label{sibl}
There exist sibling pairs of orders $2^7$, $3^5$ and $p^4$ for all primes
$p>3$ and these orders are minimal with this property; that is, there are 
no sibling pairs of orders dividing $2^6$, $3^4$ or $p^3$ for any prime 
$p>3$.
\end{thm}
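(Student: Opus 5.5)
The statement has two parts: existence of sibling pairs at the orders $2^7$, $3^5$ and $p^4$ for $p>3$, and non-existence at all smaller orders. I would treat the finite cases computationally with Algorithm Fingerprint. For $2^7$ and $3^5$ this is a direct computation over the SmallGroups Library. The infinite family of primes $p>3$ needs a uniform theoretical argument at order $p^4$, and its non-existence counterpart at $p^3$.

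\emph{Finitely many orders.} For the orders $2^n$ with $n\le 7$ and $3^n$ with $n\le 5$, I would run the fingerprint algorithm over all groups in the library. The hypothesis that $\ID$ is available for orders dividing $p^{n-1}$ holds here. Since $F(G)=F(H)$ characterises \subquot equivalence, sorting the groups of each order by fingerprint and checking that all clusters are singletons proves non-existence up to $2^6$ and $3^4$. At $2^7$ and $3^5$ I would exhibit an explicit cluster of size at least two, for instance by their SmallGroups numbers, and record it as a witness.

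\emph{Order $p^3$.} For orders dividing $p^3$ with $p>3$ I would argue by hand, since the classification is uniform. There are five groups of order $p^3$: three abelian, the Heisenberg group of exponent $p$, and the extraspecial group of exponent $p^2$. They are distinguished by abelianness together with the multiset of isomorphism types of proper subgroups. For example, the number of subgroups isomorphic to $C_{p^2}$ is $0$ for the Heisenberg group and positive for the exponent-$p^2$ extraspecial group. Orders $p$ and $p^2$ are trivial.

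\emph{Order $p^4$, the main obstacle.} The number of groups of order $p^4$ is $15$ for every $p>3$, with uniform presentations (Burnside's list). I would first locate a sibling pair for $p=5$ and $p=7$ by computation. Then I would show that this pair stays a sibling pair for every $p>3$. Plausible candidates are the two maximal-class (class $3$) groups of order $p^4$, or a pair of class-$2$ groups of exponent $p^2$ whose presentations differ by a non-square parameter. Proving this uniformly is the hard step. I would use the uniform presentations to enumerate, as functions of $p$, the conjugacy classes of proper subgroups by order and type, their class lengths ($1$ or $p$), and the normal quotients. I would then check that these data coincide for the two groups. Subgroups of order $p^3$ are the maximal subgroups, numbering $p+1$ or $1$, so the count runs over $\Phi(G)$-cosets in $G/\Phi(G)\cong C_p^2$. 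Normal quotients of order $p^2$ and $p^3$ are read off from the commutator structure. The Frattini subgroup and the central series terms must also be matched, as condition (c) requires.

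The delicate point is that the equivalence must hold for every prime $p>3$ simultaneously, whereas the computation covers only small primes. I would therefore write the counting argument parametrically in $p$. As a sanity check, I would verify that the same pair is distinguished when $p=3$, where the groups of order $81$ behave differently, which would explain the separate order $3^5$.
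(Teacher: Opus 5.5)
Your skeleton matches the paper's proof:
\begin{itemize}
\item a finite Fingerprint computation for orders $2^n$ ($n\le 7$) and $3^n$ ($n\le 5$);
\item the classification at order $p^3$, where your argument is fine and more explicit than the paper's;
\item for $p>3$, one pair with a presentation uniform in $p$, certified by listing the classes of proper subgroups, the proper quotients, $\Phi(G)$ and the central series.
\end{itemize}
The gap is that this last step is never carried out, and it is the only part of the theorem that is not a finite computation. You do not commit to a pair, and ``find one for $p=5,7$ and show it persists'' proves nothing until the uniform check is written down.

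The hints you give would also mislead you:
\begin{itemize}
\item For $p>3$ there are four groups of maximal class of order $p^4$, not two.
\item No class-$2$ group of order $p^4$ comes in a square/non-square family.
\item The right pair has subgroup classes of length $p^2$. For example, $\langle g_1\rangle$ below has normaliser $\langle g_1,g_4\rangle$. An enumeration allowing only class lengths $1$ and $p$ would therefore miscount.
\end{itemize}

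The missing content is this. Take $G_x$ with generators $g_1,\dots,g_4$ and relations $g_2^p=g_4^x$, $g_1^p=g_3^p=g_4^p=1$, $[g_2,g_1]=g_3$, $[g_3,g_1]=g_4$, with all other commutators trivial. Here $x\in\{1,v\}$ and $v$ is a primitive root mod $p$. These groups are $\SG(p^4,9)$ and $\SG(p^4,10)$: two groups of maximal class that differ by a square versus a non-square parameter. Your two guesses were each half right.

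The uniform check is short once you use that $Z(G_x)=\langle g_4\rangle$ has order $p$.
\begin{itemize}
\item $Z(G_x)$ lies in every nontrivial normal subgroup. So every proper quotient is a quotient of $G_x/Z(G_x)\cong\SG(p^3,3)$, in which $x$ has disappeared.
\item $G_x>\Phi(G_x)=\langle g_3,g_4\rangle>\langle g_4\rangle>1$ is both the lower and the upper central series.
\item An element $g_1^{e_1}\cdots g_4^{e_4}$ has order $p^2$ if $e_2\neq 0$ and order dividing $p$ otherwise. So the subgroups of order $p$ lie in $\langle g_1,g_3,g_4\rangle$. Neither its relations nor the conjugation action of $G_x$ on it involves $x$.
\item Every subgroup of order $p^2$ contains $Z(G_x)$. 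So these subgroups, and their conjugacy, correspond to subgroups of order $p$ in $G_x/Z(G_x)$. Such a subgroup is cyclic exactly when it contains an element with $e_2\neq 0$.
\item Subgroups of order $p^3$ are maximal, hence contain $\Phi(G_x)$ and are normal.
\end{itemize}
The resulting list of conjugacy classes has lengths $1$, $p$ and $p^2$. Together with the isomorphism types of the representatives, it is visibly the same for $x=1$ and $x=v$, which gives $F(G_1)=F(G_v)$.
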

	
\begin{proof}
For the primes $2$ and $3$ this can easily be determined by inspection using 
the SmallGroups Library of GAP \cite{Gap} and Algorithm Fingerprint. For 
example, sibling pairs are $(\SG(2^7, 1317), \SG(2^7, 1322))$ and 
$(\SG(3^5, 19), \SG(3^5, 20))$. There are no sibling pairs among the groups 
of orders dividing $2^6$ and $3^4$, respectively. 
	
Let $p > 3$ be an arbitrary prime. It is not difficult to see that there
is no sibling pair among the groups of order dividing $p^3$ using the 
classification of these groups. We claim that $(\SG(p^4, 9), \SG(p^4, 10))$
is a sibling pair. The two groups in this pair are not isomorphic by the
classification of groups of order $p^4$. We show that they are \subquot
equivalent. First, we exhibit power-commutator presentations for these 
groups. Let $v$ be a primitive root modulo $p$ and $x \in \{1,v\}$. Define
$G_x$ as the group with generators $g_1, \ldots, g_4$ and relations
\begin{eqnarray*}
g_2^p = g_4^x \;\; \mbox{ and } \;\; g_i^p = 1 \; (i \neq 2) \\
{[g_2, g_1]} = g_3 \;\; \mbox{ and } \;\; {[g_3, g_1]} = g_4, \\
{[g_i, g_j]} = 1 \mbox{ for all other } i>j. 
\end{eqnarray*}
Then $\SG(p^4, 9) = G_1$ and $\SG(p^4, 10) = G_v$ and these groups are
non-isomorphic. We show that $F(G_1) = F(G_v)$.
	
First, the center $Z(G_x)$ is $Z(G_x) = \langle g_4 \rangle$ and
the Frattini subgroup $\Phi(G_x)$ is $\Phi(G_x) = \langle g_3, g_4
\rangle$. Every proper normal subgroup $N$ of $G_x$ contains the center 
and hence $G_x/N$ is independent of $x$. The upper and lower central 
series of $G_x$ are $G_x > \Phi(G_x)  > Z(G_x) > \{1\}$. Hence the
Frattini subgroup and proper subgroups of upper and lower central series 
are independent of $x$. 
	
It remains to consider the conjugacy classes of proper subgroups of $G_x$.
We list these classes in Table \ref{cls}. Each row of the table describes
a conjugacy class or, in two cases, a family of conjugacy classes. Each 
class is described by its length and a representative. The representative
is a subgroup of $G_x$ described by a polycyclic generating sequence, see 
Section 8.3 of \cite{HEO} for details. Additionally, the isomorphism type 
of each representative is exhibited; we use $C_n$ to denote the 
cyclic group of order $n$.
		
\begin{table}[htb]
\begin{center}
\begin{tabular}{|c|c|l|l|}
\hline
& class length & class representative(s) & isomorphism type \\
\hline
(1) &  $1$   & $\langle g_4 \rangle$ & $C_p$ \\
(2) &  $p$   & $\langle g_3 \rangle$ & $C_p$ \\
(3) &  $p^2$ & $\langle g_1 \rangle$ & $C_p$ \\
\hline
(4) & $p$ & $\langle g_1 g_2^a, g_4 \rangle$ 
      ($a \in \{1, \ldots, p-1\}$) & $C_{p^2}$ \\
(5) & $p$ & $ \langle g_2, g_4 \rangle$ & $C_{p^2}$ \\
(6) & $1$ & $\langle g_3, g_4 \rangle$ & $C_p \times C_p$ \\
(7) & $p$ & $\langle g_1, g_4 \rangle$ & $C_p \times C_p$ \\
\hline
(8) & $1$ & $\langle g_2, g_3, g_4 \rangle$ & $C_{p^2} \times C_p$ \\
(9) & $1$ & $\langle g_1, g_3, g_4 \rangle$ & $\SG(p^3,3)$ \\
(10)& $1$ & $\langle g_1 g_2^a, g_3, g_4 \rangle$ 
	($a \in \{1, \ldots, p-1\}$) & $\SG(p^3,4)$ \\
\hline
\end{tabular}
\end{center}
\caption{Conjugacy classes of subgroups of $G_x$ with $x \in \{1, v\}$}
\label{cls}
\end{table}
\newpage
		
We briefly comment on Table \ref{cls}.  First, note that each 
element $g$ of $G_x$ can be represented (uniquely) as word of the form 
$g = g_1^{e_1} \cdots g_4^{e_4}$ with $e_i \in \{0, \ldots, p-1\}$. If
$e_1 = \ldots = e_{i-1} = 0$, then $e_i$ is the leading exponent of $g$.
Each cyclic subgroup of $G$ is generated by an element with leading 
exponent $1$. Further, if $e_2 \neq 0$, then $g$ has order $p^2$, otherwise
$g$ has order dividing $p$. The classes (1)-(3) of cyclic subgroups 
of order $p$ follow readily now. The relations of $G_x$ imply that each 
subgroup of order $p^2$ of $G_x$ contains $Z(G_x) = \langle g_4 \rangle$. 
It thus corresponds to a cyclic subgroup of order $p$ in $G_x / Z(G_x) \cong 
\SG(p^3, 3)$. The classes (4)-(7) follow readily from this. Each subgroup 
of order $p^3$ is a maximal subgroup; thus it contains $\Phi(G_x) = 
\langle g_3, g_4 \rangle$ and is normal. Hence it has the form 
$\langle g_1^{e_1} g_2^{e_2}, g_3, g_4 \rangle$ with either $e_1 = 1$ and $e_2$ 
arbitrary, or $e_1 = 0$ and $e_2=1$. This leads to the classes (8)-(10) 
and completes the table in Table \ref{cls}.
\end{proof}

\noindent
The sibling tuples of orders $2^7, 2^8, 2^9$ will be discussed in more detail
in Section \ref{2groups}. Computation shows that there is exactly
one sibling $2$-tuple among the groups of order $3^5$, while there are $34$ 
sibling $2$-tuples and two sibling $3$-tuples among the groups of order $3^6$. 
Computational evidence also suggests the following conjecture.
	
\begin{conj}
Let $p > 3$ be prime. 
\begin{enumerate}
\item[\rm (a)]
Among the $15$ groups of order $p^4$, there is exactly one sibling $2$-tuple.
(The proof of Theorem \ref{sibl} shows that there is at least one.)
\item[\rm (b)]
Among the $2p + 2 \gcd(p-1,3) + \gcd(p-1,4) + 61$ groups of order $p^5$,
there are: two tuples of length $(p+1)/2$, two tuples of length 
$(p-1)/2$, one tuple of length $\gcd(p-1,4)$, one tuple of length 
$2$ and, if $\gcd(p-1,3) = 3$, then there are two tuples of length $3$.
\end{enumerate}
\end{conj}
	
\noindent 
We observe that the groups of a sibling pair need not to have isomorphic
subgroup lattices. As an example, let $G_1 = \SG(128, 1317)$ and $G_2 = 
\SG(128, 1322)$. Using GAP, it is not difficult to verify that these two 
groups are a sibling pair. We observe that a map $\varphi$ between 
their proper subgroups does not respect joins and meets of the lattice. 
More precisely, both groups $G_i$ have three maximal 
subgroups $M_{ij} = \SG(64, 70)$ (with $1 \leq j \leq 3$) and five 
normal subgroups $U_{ik} \cong \SG(32, 21)$ (with $1 \leq k \leq 5$). 
Their partial subgroup lattice restricted to these groups is exhibited
in Figure \ref{lat}, where the subgroups $U_{ik}$ not contained in any 
$M_{ij}$ are omitted.
\begin{figure}[htb]
\begin{center}
\begin{tikzpicture}[scale=0.7]
\node (190) at (0,-1)  [shape=circle, draw]{$G_1$};
\node (182) at (-2,-3) [shape=circle, draw]{$M_{11}$};
\node (184) at (-0,-3) [shape=circle, draw]{$M_{12}$};
\node (177) at (2,-3)  [shape=circle, draw]{$M_{13}$};
\node (159) at (2,-6)  [shape=circle, draw]{$U_{13}$};
\node (170) at (0,-6)  [shape=circle, draw]{$U_{12}$};
\node (154) at (-2,-6) [shape=circle, draw]{$U_{11}$};
\draw[-] (177) edge[bend left=0] (190);
\draw[-] (182) edge[bend left=0] (190);
\draw[-] (184) edge[bend left=0] (190);
\draw[-] (154) edge[bend left=0] (182);
\draw[-] (159) edge[bend left=0] (177);
\draw[-] (170) edge[bend left=0] (184);
\end{tikzpicture}
\vspace{2em}
	
\begin{tikzpicture}[scale=0.7]
\node (190) at (0,-1)  [shape=circle, draw]{$G_2$};
\node (178) at (-2,-3) [shape=circle, draw]{$M_{21}$};
\node (184) at (0,-3)  [shape=circle, draw]{$M_{22}$};
\node (181) at (2,-3)  [shape=circle, draw]{$M_{23}$};
\node (147) at (2,-6)  [shape=circle, draw]{$U_{22}$};
\node (170) at (-1,-6) [shape=circle, draw]{$U_{21}$};
\draw[-] (178) edge[bend left=0] (190);
\draw[-] (181) edge[bend left=0] (190);
\draw[-] (184) edge[bend left=0] (190);
\draw[-] (147) edge[bend left=0] (181);
\draw[-] (170) edge[bend left=0] (178);
\draw[-] (170) edge[bend left=0] (184);
\end{tikzpicture}
\end{center}
\caption{Partial subgroup lattices for $G_1 = \SG(128, 1317)$ and 
$G_2 = \SG(128, 1322)$}
\label{lat}
\end{figure}

\subsection{Character tables and Brauer pairs}
\label{secstrgsibl}

Two finite groups $G$ and $H$ with conjugacy classes $C(G)$ and $C(H)$, 
respectively, have equivalent character tables, if there exists a 
bijection $\tau : C(G) \ra C(H)$ which induces a bijection on the 
complex characters $\sigma : Irr(H) \ra Irr(G) : \chi \ms \chi \circ \tau$. 
Pairs of groups with equivalent character tables are easy to find; an 
example is given by $D_8$ and $Q_8$. The groups $G$ and $H$ are {\em 
Brauer equivalent} if their character tables including power maps are
equivalent; that is, they have equivalent character tables and 
additionally $\tau \circ \pi_n^G = \pi_n^H \circ \tau$ holds for each 
$n \in \Z$, where $\pi_n^G : C(G) \ra C(G)$ is the $n$th power map 
induced by $g \ms g^n$. 

A {\em Brauer pair} is a pair of non-isomorphic Brauer equivalent groups.
Brauer \cite{Brauer} initiated the search 
for Brauer pairs. Dade \cite{Dade} found the first examples of such pairs;
these first examples have order $p^7$.

An algorithm to check if two groups $G$ and $H$ are Brauer equivalent is 
available in GAP \cite{Gap}. This determines the character tables of $G$ 
and $H$ and then searches for a permutation on the conjugacy classes of 
$G$ that induces the desired equivalence of complex characters and power 
maps.  We cite the following results on Brauer pairs of small prime-power
order.

\begin{thm} 
{\bf (Skrzipczyk \cite{Skrzipczyk}, Eick \& Müller \cite{EMu06})} \\
There exist Brauer pairs of orders $2^8$, $3^6$ and $p^5$ for all primes
$p > 3$ and these orders are minimal with this property; that is, there 
are no Brauer pairs of orders dividing $2^7$, $3^5$ or $p^4$ for any prime
$p>3$.
\end{thm}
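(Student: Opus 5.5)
The plan has two halves, as in the proof of Theorem~\ref{sibl}: exhibit Brauer pairs at the stated orders, and show that none exist at smaller orders.

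\emph{Existence.} For $p=2$ and $p=3$ I will search by computer. Among the $56\,092$ groups of order $2^8$ and the groups of order $3^6$, I will compute character tables with power maps and sort the groups by cheap invariants: the number of classes, the multiset of class lengths, the character degrees, and the element orders attached to the classes. For each cluster that survives, I will run the GAP equivalence test described above, which searches for a permutation $\tau$ of the classes compatible with the irreducible characters and with $\pi_n$. Any non-isomorphic pair that passes the test is a Brauer pair. Non-isomorphism is certified by distinct SmallGroups identifiers.

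For $p>3$ no finite search suffices, so I will give a uniform construction. I will take two groups of order $p^5$ and maximal class, or of class $3$, with polycyclic presentations $G_x$ depending on a parameter $x\in\{1,v\}$, $v$ a primitive root, mirroring the family $G_x$ above. I will then write down the classes and the irreducible characters of both groups explicitly. The linear characters come from $G/G'$. The nonlinear characters are induced from characters of an abelian normal subgroup of index $p$, or inflated from a quotient of order $p^4$. The point to check is that $x$ enters the class structure and the character values only through a relabelling of the $p$-th roots of unity that can be absorbed into $\tau$. I will also check that the $p$-th power map, the only nontrivial power map up to Galois twisting, is preserved. Non-isomorphism will follow from the classification of groups of order $p^5$.

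\emph{Minimality.} For orders dividing $2^7$ and $3^5$ this is again a finite computation over the SmallGroups Library. Groups with non-equivalent tables are split by invariants. The few pairs with equivalent tables but different power-map data are shown to be inequivalent by an exhaustive search over $\tau$.

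For $p^4$ with $p>3$, I will use the classification of the $15$ groups of order $p^4$ (and those of order dividing $p^3$). Most groups are separated already by their abelianization $G/G'$, which is the group of linear characters, and by $|Z(G)|$, which equals the number of classes of size $1$. The exponent is determined by the power maps. The remaining hard cases are pairs such as $\SG(p^4,9)$ and $\SG(p^4,10)$, which the proof of Theorem~\ref{sibl} shows cannot be separated by subgroups and quotients. For these, the $p$-th power map restricted to the classes with $e_2\neq 0$ lands in $Z(G_x)$ via $g_2^p=g_4^x$.

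I expect this last point to be the main obstacle. It requires showing that no class bijection $\tau$ that respects the characters can convert the exponent $x=1$ into $x=v$. To handle it I will express the faithful characters of $G_x$ on $Z(G_x)$ as sums of $\lambda^{g}$ over a $G_x$-orbit. I will then show that the pairing between the central character values and the image of $\pi_p$ detects whether $x$ is a square modulo $p$, giving an invariant that differs between $x=1$ and $x=v$. Since this pairing involves $p$-th roots of unity, the Galois action must be controlled carefully; if the direct computation becomes too heavy, I will fall back on the published arguments of Skrzipczyk \cite{Skrzipczyk} and Eick \& M\"uller \cite{EMu06}.
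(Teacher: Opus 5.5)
The paper gives no proof of this statement; it cites Skrzipczyk and Eick \& M\"uller. Your computer searches for $p\in\{2,3\}$ are a reasonable way to reproduce those cases. The gap is in the generic part for $p>3$. There you use one heuristic --- $x$ enters only through $p$-th roots of unity --- for two opposite jobs: absorbing $x$ into $\tau$ at order $p^5$, and detecting it via $\pi_p$ at order $p^4$. You give no criterion that separates the two cases.

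\emph{Existence.} You do not fix the groups. The structure you describe (faithful characters induced from an abelian normal subgroup of index $p$, in a family mirrored from the order-$p^4$ one) is the setting in which, as shown below, the parameter is \emph{not} absorbable. The missing idea is vanishing off the centre. Take the family $G_x$ of Theorem~\ref{strgsibl}, with $g_1^p=g_5^x$. Every $h\notin Z(G_x)=\langle g_5\rangle$ has a commutator generating $Z(G_x)$: take $[h,g_4]$, $[h,g_3]$, $[h,g_2]$, $[h,g_1]$ when the leading exponent of $h$ is at $g_1$, $g_2$, $g_3$, $g_4$, respectively. Hence $hZ(G_x)\subseteq h^{G_x}$. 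So the $p-1$ faithful irreducible characters have degree $p^2$ and vanish off $Z(G_x)$. Also, the non-central classes are the full preimages of the classes of $G_x/Z(G_x)$, a group that does not depend on $x$. Since $G_x$ is regular (class $4<p$) and $G_x'$ is elementary abelian, $\pi_p$ sends the class of $g_1^e m$ (with $m\in\langle g_2,\ldots,g_5\rangle$, $p\nmid e$) to $g_5^{xe}$, and every other class to $1$. Choose $c$ with $cx\equiv x' \pmod p$. The map $\tau$ that is the identity on non-central classes and sends $g_5^j\mapsto g_5^{cj}$ is then a Brauer equivalence $G_x\to G_{x'}$. It permutes the faithful characters, and the power maps prime to $p$ come for free.

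\emph{Minimality.} For $\SG(p^4,9)$ versus $\SG(p^4,10)$, your pairing cannot work as described. Its data are the central values $\chi(g_4^j)=p\omega^j$ and the fact that $\pi_p$ sends every class with $e_2=e$ to $g_4^{xe}$. The relabelling $g_4^j\mapsto g_4^{cj}$ turns this into the data for $cx$, so it cannot see the square class of $x$. The detecting quantity is quadratic.

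Take a faithful $\chi=\lambda^{G_x}$, where on $A=\langle g_2,g_3,g_4\rangle$ we have $\lambda(g_2)=\zeta$, $\lambda(g_3)=\eta$, $\lambda(g_4)=\omega$, so $\zeta^p=\omega^x$. Then $\chi(g_2^eg_4^f)=\zeta^e\omega^f\sum_{k=0}^{p-1}\eta^{ke}\omega^{ek(k-1)/2}$, a quadratic Gauss sum. Fix a primitive $p$-th root of unity $\epsilon$ and put $\gamma=\sum_k\epsilon^{k^2}$. For $e\neq 0$ this value can be written uniquely as $s\gamma\xi$, with $s=\pm1$ and $\xi$ a $p^2$-th root of unity. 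If $\xi^p=\epsilon^w$, then $s=\left(\frac{2w}{p}\right)\left(\frac{x}{p}\right)$. These are the only table entries outside $\mathbb{Q}(\epsilon)$. So the sets of entries already differ, and the ordinary character tables of $G_1$ and $G_v$ are inequivalent; no power map is needed for this pair.

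The rest of your case analysis of the $15$ groups is only sketched, but it is routine. Without the two computations above, the $p>3$ half of your plan reduces to your fallback on the literature, which is what the paper does.
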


\noindent
A {\em strong sibling $l$-tuple} is a list of $l$ pairwise non-isomorphic
groups that are pairwise \subquot equivalent and pairwise a Brauer pair;
that is, a strong sibling $l$-tuple is a sibling $l$-tuple so that each
pair in the list is a Brauer pair.

\begin{thm}
\label{strgsibl}
There exist strong sibling pairs of orders $2^8$, $3^6$ and $p^5$ for 
all primes $p>3$ and these orders are minimal with this property; that is,
there are no strong sibling pairs of orders dividing $2^7$, $3^5$ or $p^4$ 
for any prime $p>3$.
\end{thm}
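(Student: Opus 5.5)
The minimality half of Theorem \ref{strgsibl} follows at once from the cited theorem of Skrzipczyk and Eick \& Müller. A strong sibling pair is, by definition, in particular a Brauer pair. By that theorem there are no Brauer pairs of orders dividing $2^7$, $3^5$ or $p^4$ for primes $p>3$. Hence there are no strong sibling pairs of these orders either.

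It therefore remains to exhibit strong sibling pairs of orders $2^8$, $3^6$ and $p^5$ with $p>3$.

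\textbf{Orders $2^8$ and $3^6$.} Here the plan is computational, as in the proof of Theorem \ref{sibl}.
\begin{enumerate}
\item[(1)] Run Algorithm Fingerprint over all groups of order $2^8$ and of order $3^6$ in the SmallGroups Library. This is possible because $\ID$ is available for all orders dividing $2^7$ and $3^5$. The result is the sibling tuples; for $3^6$ these are the $34$ pairs and two triples mentioned above.
\item[(2)] Within each sibling tuple, test every pair for Brauer equivalence with the GAP algorithm described in Section \ref{secstrgsibl}.
\item[(3)] Record one explicit pair that passes both tests, so that the claim can be verified independently.
\end{enumerate}
There is a risk that no Brauer pair of order $2^8$ or $3^6$ is also a sibling pair. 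To address this, I would first check the known minimal Brauer pairs of these orders from \cite{Skrzipczyk, EMu06} against the fingerprint, since these are the natural candidates.

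\textbf{Order $p^5$ with $p>3$.} This is the main obstacle, because a uniform argument is needed for all primes. I would start from the Brauer pairs of order $p^5$ of Eick \& Müller \cite{EMu06}. These come as parametrised families $G_x$, given by power-commutator presentations depending on a parameter $x$ (typically $x=1$ versus $x=v$ with $v$ a primitive root, or more generally $x$ running over a set of non-square or non-cube classes). By the cited result they are already Brauer equivalent and non-isomorphic. So it suffices to show $F(G_1)=F(G_x)$, following the strategy of the proof of Theorem \ref{sibl}:
\begin{enumerate}
\item[(a)] Show that $Z(G_x)$, $\Phi(G_x)$ and the terms of the upper and lower central series are given by the same generating sets for every $x$, and that their isomorphism types do not depend on $x$.
\item[(b)] Show that every nontrivial normal subgroup contains a minimal central subgroup $\langle g_5\rangle$, which is the only place where $x$ enters (e.g.\ through a relation $g_i^p=g_5^x$). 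Then every proper quotient is independent of $x$.
\item[(c)] Classify the conjugacy classes of proper subgroups uniformly in $x$, by leading-exponent arguments as in Table \ref{cls}. For each class, show that its length and isomorphism type do not depend on $x$. The key device is that replacing a generator by a suitable power, with exponent coprime to $p$, absorbs the parameter $x$ inside any proper subgroup. Subgroups of order $p^4$ are maximal and hence contain $\Phi(G_x)$. Smaller subgroups either contain $\langle g_5\rangle$ and are handled in $G_x/\langle g_5\rangle$, or meet $\langle g_5\rangle$ trivially and are treated by direct case analysis.
\end{enumerate}

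If the Eick--Müller pair does not have the property in (b), I would search the groups of order $5^5$ and $7^5$ for a strong sibling pair and take it as the template for the family. I would then carry out (a)--(c) for that family. I expect step (c) to be the bulk of the work.
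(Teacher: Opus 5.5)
Your proposal matches the paper's proof. Minimality comes from the fact that a strong sibling pair is a Brauer pair and no Brauer pairs exist in those orders, and existence for $2^8$ and $3^6$ is a GAP computation inside the sibling tuples. For $p^5$ the paper uses exactly the Eick--M\"uller family you describe: $g_1^p=g_5^x$ with $x\in\{v,\ldots,v^b\}$, $b=\gcd(p-1,4)$, and $Z(G_x)=\langle g_5\rangle$, so proper quotients and central series are independent of $x$ and subgroup classes are sorted by whether they contain $g_5$. This family does have property (b), so your fallback search is not needed. Your ``direct case analysis'' is also quick: any element with nonzero $g_1$-exponent has $p$th power a nontrivial power of $g_5$, so subgroups avoiding $g_5$ lie in the $x$-free normal subgroup $\langle g_2,\ldots,g_5\rangle$.
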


\begin{proof}
Among the $155$ groups contained in sibling tuples of groups of order 
$2^8$, there are $8$ strong sibling pairs. Note that Skrzipczyk 
\cite{Skrzipczyk} reports that there are $10$ Brauer pairs of order 
$2^8$. Among the $74$ groups contained in sibling tuples of groups of 
order $3^6$, there are $16$ strong sibling pairs and $2$ strong sibling
$3$-tuples. These results can be obtained readily using GAP.
	
Let $p > 3$ be prime. Then there are no strong sibling pairs of orders dividing
$p^4$, since there are no Brauer pairs of such orders by \cite{EMu06}. Let 
$b = \gcd(p-1,4)$ and let $v$ be a primitive root mod $p$. For $x \in 
\{v, \ldots, v^b\}$ we define
\begin{eqnarray*}
G_x &=& \hspace{-0.2cm}
\langle g_1, \ldots, g_5 \; \mid \; g_1^p =g_5^x, \; \;
g_2^p=g_3^p=g_4^p=g_5^p=1, \;\; 
[g_i,g_j] = 1 \mbox{ for } i>j \mbox{ except } \\
&& \hspace{2cm} 
[g_2,g_1]=g_3, [g_3,g_1]=g_4, [g_4,g_1]=g_5, [g_3,g_2]=g_5 \rangle.
\end{eqnarray*}
Note that $b \in \{2,4\}$, since $p$ is an odd prime. We show that the 
groups defined as $G_x$ yield a strong sibling $b$-tuple. They are not
isomorphic by the classification of groups of order $p^5$, see Girnat 
\cite{Gir}, and they form a Brauer $b$-tuple by \cite{EMu06}. It remains 
to show that they are \subquot equivalent. 
First, $Z(G_x) = \langle g_5 \rangle$. Thus each proper normal subgroup of 
$G_x$ contains $g_5$ and the quotient is independent of $x$. The upper and 
lower central series of $G_x$ coincide and are
\[ G_x > \langle g_3, g_4, g_5 \rangle
> \langle g_4, g_5 \rangle
> \langle g_5 \rangle 
> \{1\}.\]
Hence all proper subgroups in this series are independent of $x$ and
similarly for $\Phi(G_x) = \langle g_3, g_4, g_5 \rangle$. It remains to
determine the conjugacy classes of proper subgroups of $G_x$. We use a
similar approach as in the proof of Theorem \ref{sibl}. The result is
exhibited in Table \ref{tab2}; compare with Table \ref{cls} for notation.
	
\begin{table}[htb]
\begin{center}
\begin{tabular}{|c|c|l|l|}
\hline
isom type & class length & representative(s) & isomorphism type \\
\hline
(1) &   $1$ & $\langle g_5 \rangle$ & $C_p$ \\
(2) &   $p$ & $\langle g_4 \rangle$ & $C_p$ \\
(3) & $p^2$ & $\langle g_3 \rangle$ & $C_p$ \\
(4) & $p^2$ & $\langle g_2 g_4^a \rangle$  
	($a \in \{0, \ldots, p-1\}$) & $C_p$ \\
\hline
(5) &   $1$ & $\langle g_4, g_5 \rangle$ & $C_p \times C_p$ \\
(6) &   $p$ & $\langle g_3, g_5 \rangle$ & $C_p \times C_p$ \\
(7) & $p^2$ & $\langle g_3, g_4 \rangle$ & $C_p \times C_p$ \\
(8) &   $p$ & $\langle g_2 g_4^a, g_5 \rangle$ 
	($a \in \{0, \ldots, p-1\}$) & $C_p \times C_p$ \\
(9) & $p^2$ & $\langle g_2 g_3^a, g_4 \rangle$ 
	($a \in \{0, \ldots, p-1\}$) & $C_p \times C_p$ \\
(10) & $p^2$ & $\langle g_1 g_2^a, g_5 \rangle$ 
	($a \in \{0, \ldots, p-1\}$) & $C_{p^2}$ \\
\hline
(11) & $1$ & $\langle g_3, g_4, g_5 \rangle$ 
		& $C_p \times C_p \times C_p$ \\
(12) & $p$ & $\langle g_2, g_4, g_5 \rangle$ 
		& $C_p \times C_p \times C_p$ \\
(13) & $p$ & $\langle g_2 g_4^a, g_3, g_5 \rangle$
	($a \in \{0, \ldots, p-1\}$) & $\SG(p^3, 3)$ \\
(14) & $p$ & $\langle g_1 g_2^a, g_4, g_5 \rangle$     
	($a \in \{0, \ldots, p-1\}$) & $\SG(p^3, 4)$ \\
\hline
(15) & $1$ & $\langle g_2, \ldots, g_5 \rangle$ 
		& $\SG(p^4,12)$ \\
(16) & $1$ & $\langle g_1 g_2^a, g_3, g_4, g_5 \rangle$ 
	($a \in \{0, \ldots, p-1\}$)
  	        & $\SG(p^4,8)$  \\
\hline
\end{tabular}
\end{center}
\caption{Conjugacy classes of proper subgroups of $G_x$ with $x \in 
	\{v, \ldots, v^b\}$}
\label{tab2}
\end{table}

We briefly comment on Table \ref{tab2}.
First, each cyclic subgroup of $G_x$ is generated by
an element $g_1^{e_1} \cdots g_5^{e_5}$ with leading exponent $1$. If
$e_1 \neq 0$, then the subgroup has order $p^2$ and contains $g_5$, and
otherwise the subgroup has order dividing $p$. The classes (1)-(4) follow
readily from this. Next, each of the classes containing $\langle g_5 \rangle$
correspond one-to-one to a class in $G/\langle g_5 \rangle$ and is 
independent of $x$. This yields the classes (5),(6),(8),(10)-(16).
It remains to consider the classes not containing $\langle g_5 \rangle$.
These are subgroups of $\langle g_2, \ldots, g_5 \rangle$ and hence are
independent of $x$. Further, they cannot contain both of $g_2, g_3$. From
this the classes (7) and (9) follow. In summary, all of these conjugacy 
classes are independent of $x$, and thus the result of the proof follows.
\end{proof}

\section{(Strong) sibling tuples of order dividing $2^9$}
\label{2groups}

We report on the (strong) sibling tuples of order dividing $2^9$. The 
following result is based on computation.

\begin{thm}
\label{2tothe9}
There are no sibling tuples of order dividing $2^6$.
\begin{enumerate}
\item[\rm (a)]
Among the $2\,328$ groups of order $2^7$ there are $3$ sibling $2$-tuples
and no strong sibling $2$-tuples.
\item[\rm (b)]
Among the $56\,092$ groups of order $2^8$ there are $71$ sibling $2$-tuples, 
$3$ sibling $3$-tuples and $1$ sibling $4$-tuple. There are $8$ strong 
sibling $2$-tuples.
\item[\rm (c)]
Among the $10\,494\,213$ groups of order $2^9$ there are $428$ sibling 
$2$-tuples, $6$ sibling $3$-tuples and $3$ sibling $4$-tuples. There are 
$56$ strong sibling $2$-tuples.
\end{enumerate}
\end{thm}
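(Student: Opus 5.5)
This is a computational result, so the plan is to run Algorithm Fingerprint over the SmallGroups Library order by order and then refine the resulting clusters with Brauer equivalence. For each $n \leq 9$ we compute $F(G)$ for every group $G = \SG(2^n,i)$. Algorithm Fingerprint needs $\ID$ for all orders dividing $2^{n-1}$; this is available in the SmallGroups Library for orders up to $2^8$, so it covers all $n \leq 9$. We then sort the list of fingerprints and group equal entries into clusters. Clusters of size $l \geq 2$ are exactly the sibling $l$-tuples, since groups in the library are pairwise non-isomorphic. For $n \leq 6$ all fingerprints should turn out to be distinct, consistent with Theorem \ref{sibl}. For $n=7$ and $n=8$ we read off the cluster sizes to get the counts in (a) and (b).

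For the strong sibling counts, we take each sibling cluster and test every pair of its members for Brauer equivalence with the GAP algorithm described in Section \ref{secstrgsibl}. This computes character tables with power maps and searches for a compatible class bijection. A strong sibling tuple is a maximal set of members that are pairwise Brauer equivalent. Brauer equivalence is an equivalence relation, so these sets are its classes inside each cluster. For $n=7$ there can be none, by the theorem of Skrzipczyk and Eick \& Müller cited above. For $n=8$ the result must agree with the $8$ pairs found in the proof of Theorem \ref{strgsibl}, which gives a consistency check.

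The main obstacle is $n=9$, with its $10\,494\,213$ groups. Computing all conjugacy classes of subgroups for each group is costly, so I would not apply the full fingerprint directly. Instead I would first partition the groups by cheap invariants, such as the $\ID$s of the quotients by $\Phi(G)$ and the terms of the central series, and then the sorted quotient data from steps (3)--(4). The full subgroup-class computation of steps (1)--(2) would then only run on groups whose cheap partial fingerprint is not already unique. This is valid because equal full fingerprints imply equal partial fingerprints, so no sibling tuple is lost. The refinement should leave few non-singleton classes, from which we extract the $428$ pairs, $6$ triples and $3$ quadruples.

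Finally we run the Brauer test on the roughly $900$ groups in those clusters to isolate the $56$ strong sibling pairs. Where the Brauer search itself is expensive, I would first compare cheap character-table invariants such as class sizes and centralizer orders.
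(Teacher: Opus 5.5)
Your plan is correct and follows the paper's own route. The paper gives no written argument beyond saying the result is "based on computation": run Algorithm Fingerprint over the SmallGroups Library, cluster equal fingerprints, and apply GAP's Brauer-equivalence test inside the clusters. It reports $155$ clustered groups and $8$ strong pairs for $2^8$, which matches your consistency check. Your staged prefilter for order $2^9$ is a reasonable addition the paper does not describe, but every cheap invariant you use must genuinely be a function of $F(G)$. $\ID(G/\Phi(G))$ and $\ID(G/\gamma_i(G))$ qualify, since they are the largest elementary abelian quotient, respectively the largest quotient of class less than $i$, in the sorted quotient list. A positional $\ID(G/Z_i(G))$ does not obviously qualify, because the bijection $\rho$ in sub-quot equivalence need not send $G/Z(G)$ to $H/Z(H)$.
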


\noindent 
We consider the $56$ strong sibling pairs of order $2^9$ in more detail. 
Tables \ref{tab4} and \ref{tab5} in  Appendix \ref{app} give an 
overview on them, including their $\ID$s, their rank and $p$-class.
We also report the results of evaluating additional invariants on these
pairs. We briefly outline the considered invariants.

\subsection*{Isoclinism}

Isoclinism is an equivalence relation of finite groups, weaker than 
isomorphism. It was introduced by Hall \cite{Hall} as a method to assist 
the classification of $p$-groups. Two groups $G$ and $H$ are isoclinic if 
there is a pair of isomorphisms $\alpha : G/Z(G) \ra H/Z(H)$ and 
$\beta : [G,G] \ra [H,H]$ that is compatible with the commutator
map $\gamma_G : G/Z(G) \times G/Z(G) \ra [G,G] : (g Z(G), hZ(G)) \ra [g,h]$;
more precisely, $\gamma_H( \alpha(x), \alpha(y) ) = \beta( \gamma_G(x,y))$
holds for all $x,y \in G/Z(G)$. We use the algorithms of the XMod package 
\cite{XMod} to check isoclinism of groups in GAP.

\subsection*{Group rings and the modular isomorphism problem}

The modular isomorphism problem (MIP) asks if there exist two non-isomorphic
$p$-groups $G$ and $H$ with isomorphic group rings $\F G \cong \F H$, where
$\F$ is a field of characteristic $p$. First examples of such groups were 
found recently by Garcia-Lucas, Margolis \& del Rio \cite{Margolis}; the 
smallest examples are one pair of groups of order $2^9$.
An algorithm to check if $\F G \cong \F H$ for two $p$-groups $G$ and $H$
and $\F$ the field with $p$ elements was developed by Eick \cite{Eick08}
and implemented in \cite{modisom}. This algorithm determined that there 
are no examples of order dividing $2^8$ or $3^6$. Not all of the groups of 
order $2^9$ have been checked for MIP-examples, see Moede \& Margolis 
\cite{MoedeMargolis}. We check the strong sibling $2$-tuples of order 
$2^9$ in this respect, but found no new MIP examples.

\subsection*{(Outer) automorphism groups}

Two groups $G$ and $H$ are {\em (outer) automorphism equivalent} if 
$Aut(G) \cong Aut(H)$ or $Out(G) \cong Out(H)$, respectively.
Automorphism equivalent groups are not difficult to find: An example 
are the groups $D_8$ and $C_4 \times C_2$. 
An algorithm to compute the automorphism group of a finite $p$-group
was developed by Eick, Leedham-Green \& O'Brien \cite{ELGO02} and
implemented in the AutPGroup Package \cite{AutPGrp}. Given a
$p$-group $G$, the method returns a list of generators for $Aut(G)$;
if $Aut(G)$ is solvable, the list 
is a polycyclic generating set, see Section 8.3 of \cite{HEO} for details. 
This allows us to read off polycyclic presentations for $Aut(G)$ and $Out(G)$ 
which, in turn, facilitates isomorphism tests for these groups.

\section{Group identification for the groups of order $2^9$}
\label{idgroup}

The SmallGroups Library provides an effective algorithm to solve the group
identification problem for all contained group orders 
except those divisible by $2^9$. In particular, it has an algorithm to
identify the groups of order $2^8$. We used that as a starting point for our 
own investigations and the development of an algorithm to identify the
groups of order $2^9$. As a first step we recall the main ideas behind the
group identification for groups of order $2^8$.

\subsection{Group identification for the groups of order $2^8$}
\label{id28}

The algorithm for groups of order $2^8$ uses a decision tree. Each vertex 
$v$ in the tree corresponds to a set $s(v)$ of groups of order $2^8$. If 
$v$ is the root of the tree, then $s(v)$ is the set of all groups, and if 
$v$ is a leaf in the tree, then $|s(v)| = 1$. If $v$ is a vertex in the
tree with the graph-theoretic children $v_1, \ldots, v_l$, then $s(v)$ is 
the disjoint union of the sets $s(v_1), \ldots, s(v_l)$.

Each non-leaf vertex $v$ in the tree has an associated function $f_v$.
This function can be applied to the groups in $s(v)$ and evaluates an
invariant. If $u$ is a child of the vertex $v$ in the tree, then $u$ has 
an associated outcome $r(u)$ and it holds that
\[ s(u) = \{ G \in s(v) \mid f_v(G) = r(u)\}.\]

If a group $G$ of order $2^8$ is given, then the tree is used to decide 
$\ID(G)$ by iteratively evaluating $f_v(G)$ until the leaf with $s(v) = 
\{G\}$ is found. The leaf determines $\ID(G)$.
It remains to describe the function $f_v$ in more detail. If $v$ has
distance at most $3$ to the root, then $f_v$ depends on this distance
only. The function $f_v$ computes:
\begin{enumerate}
\item[$\bullet$] Distance $0$:
The abelian invariants of the quotients in the derived series of $G$.
\item[$\bullet$] Distance $1$:
The sizes of the quotients of the $p$-central series of $G$.
\item[$\bullet$] Distance $2$:
The orders $o(g)$ of the elements $g \in G$.
\item[$\bullet$] Distance $3$:
For each conjugacy class $g^G$ of $G$, the pair $(|g^G|, o(g))$. 
\end{enumerate}

\noindent 
The respective results of the last two computations are collected
to obtain an invariant.

\noindent 
Based on these invariants, $8349$ groups of the $56\,092$ groups of order
$2^8$ are identified.

For the vertices of distance at least $4$ to the root, individual 
invariants for each vertex are used. As the tree has a rather large 
number of vertices, no overall description of {\em all} used invariants 
is feasible. Instead, we include two important examples.
\bigskip

\noindent
{\bf Conjugacy class clusters and power-maps.}
Let $C$ be a conjugacy class of $G$, let $l(C)=|C|$ and let $o(C)$ be
the order of a representative of $C$. The $(\lambda,\omega)$-cluster 
of conjugacy
classes of $G$ is the union of all conjugacy classes $C$ of $G$ with 
$\lambda= l(C)$ and $\omega = o(C)$. Note that a conjugacy class cluster is 
invariant under isomorphisms.

The conjugacy class clusters $C_1, \ldots, C_k$ are now sorted and 
refined. For the refinement choose $r \in \N$ and evaluate $C_i^r 
= \{ g^r \mid g \in C_i\}$ for $1 \leq i \leq k$. For each $i, j 
\in \{1, \ldots, k\}$ let $A(i,j) = \{ g \in C_i \mid g^r \in C_j\}$ 
and $B(i,j) = C_i^r \cap C_j$. We then split the cluster $C_i$ into 
its subsets $A(i,1), \ldots, A(i,k)$ and we split $C_j$ into its 
subsets $B(1,j), \ldots, B(k,j)$. This refinement is still an 
invariant under isomorphisms.

To simplify notation we denote the refined conjugacy class clusters
by $C_1, \ldots, C_k$. We define the map $\varphi : \{1, \ldots, k\} 
\ra \{ 1, \ldots, k \}$ with $\varphi(i) = j$ if $C_i^r = C_j$. This 
map $\varphi$ is an isomorphism invariant.

This approach can be used with $r$th powers for any $r \in \N$. A 
variation of the approach is obtained if commutators are evaluated
instead of powers, or other words in the free group are considered
and evaluated. Hence the general idea described here allows a wealth 
of different functions $f_v$.

\begin{rem}
The result of the conjugacy class clustering allows us to identify all but
$38$ pairs of groups among the groups of order $2^8$.
\end{rem}
\bigskip

\noindent
{\bf Random isomorphism testing.}
A random isomorphism test for solvable groups of small order has been 
described in \cite{BE99}. This is applied in some cases when $s(v)$ has
two groups remaining and these are very hard to distinguish by invariants.
Let $G$ be the given group and let $s(v) = \{ G_1, G_2 \}$. 
The random isomorphism searches for an isomorphism $G \cong G_i$ for 
either $i=1$ or $i=2$ at random. Once an isomorphism is found, the group
$G$ is identified.
A more detailed description of the random isomorphism test is given in 
\cite{BE99}. It works well in practise, despite the randomised approach.

\begin{rem}
The result of this test is used for all but one of the remaining $38$ 
pairs. The last pair splits by checking if a specific group is
a subgroup of $G$.
\end{rem}

\subsection{Group identification for the groups of order $2^9$}
\label{id29}

Our algorithm for the identification of groups of order $2^9$ is organised 
as a decision tree similar to that for the groups of order $2^8$. We 
continue to use the notation for the decision tree as introduced in 
Section \ref{id28}. In contrast to the decision tree for the groups of 
order $2^8$, the identification for groups of order $2^9$ does not use 
individual functions $f_v$ for each vertex of the tree.

Let $\lambda(G)$ denote the last non-trivial term of the lower $p$-central
series of a $p$-group $G$. The first steps of the function $f_v$ are:
\begin{enumerate}
\item[$\bullet$] Step (1):
The rank of the group $G$.
\item[$\bullet$] Step (2):
The orders $o(g)$ of the elements $g\in G$.
\item[$\bullet$] Step (3):
For each conjugacy class $g^G$ of $G$, the pair $(|g^G|, o(g))$.
\item[$\bullet$] Step (4):
The abelian invariants of the successive quotients in the derived series
of $G$.
\item[$\bullet$] Step (5):
$\ID(G/\lambda(G))$.
\end{enumerate}

\noindent 
These steps do not give a fine partition of the groups of order $2^9$, but
they do provide a useful splitting of all groups in more manageable 
subsets. We now go into detail with our further invariants.
\bigskip

\noindent
{\bf Conjugacy classes and power-maps.}
We first apply a variation of the conjugacy class clustering of
Section \ref{id28}. Note that we have determined the conjugacy 
classes of $G$ for Step (3) in the decision tree; denote these 
by $C_1, \ldots, C_k$.

\begin{enumerate}
\item[$\bullet$] Step (6):
We determine $C_i^3 = \{ g^3 \mid g \in C_i \}$ and $j$
with $C_i^3 \subseteq C_{j}$. The map $\varphi : \{ 1, \ldots, k\} \ra
\{ 1, \ldots, k\} : i \ms j$ is a permutation, since $\gcd(|G|, 3) = 1$.
The cycle type of $\varphi$ is an isomorphism invariant. 
\item[$\bullet$] Step (7):
We proceed with the conjugacy class clusters and power-maps as described
in Section \ref{id28}.
\end{enumerate}

\begin{rem}
The steps described so far identify $99.9\%$ of all groups of 
order $2^9$; that is, roughly $10\,000$ groups remain.
\end{rem}
\bigskip

\noindent
{\bf Sibling related invariants.}
The following invariants use a version of the \subquot equivalence concept 
of Section \ref{siblings}. The used version is simplified, since this version
is faster to evaluate and still sufficient for our purposes.

\begin{enumerate}
\item[$\bullet$] Step (8):
Determine $\ID(Q)$ for all proper quotients $Q=G/N$ with $N \leq 
Z(G)$ cyclic.
\item[$\bullet$] Step (9):
Determine $\ID(M)$ for all maximal subgroups $M \leq G$.
\end{enumerate}

\begin{rem}
The steps described so far identify all but $139$ pairs and one triple of
groups of order $2^9$.
\end{rem}
\bigskip

\noindent
{\bf Random isomorphism testing.}
The remaining cases of groups can now be identified effectively using
an isomorphism test, since the sets of possible options for each group
are very small at this point. 

\begin{enumerate}
\item[$\bullet$] Step (10):
We use the random isomorphism technique described in Section \ref{id28}. 
\end{enumerate}

\noindent 
Note that the random isomorphism test produces a correct answer 
in all cases in this application. Note also that Step (10) can be replaced by 
the standard presentation algorithm of the ANUPQ package \cite{ANUPQ}, see 
also \cite{OBr94},  if desired.

\begin{rem}
The results of this test identify all remaining groups of order $2^9$.
\end{rem}

\subsection{Runtimes}

We include an outline of the runtimes used by the algorithms
for identification of groups of order dividing $2^8$ and
groups of order $2^9$.
These calculations used GAP $4.14.0$ on an Intel Core
i7-7700 processor.

For each order dividing $2^8$ we selected $100\,000$ groups
at random (uniformly distributed) from the set of all groups
of the respective order from the SmallGroups Library and
identified them via the algorithm presented in Section \ref{id28}.
For each order
we calculated the average runtime per group. The results
are presented in Table \ref{tab3}.

\begin{table}[h!tb]
\begin{center}
\begin{tabular}{|c|c|c|c|c|c|c|}
\hline
Order of group & $2^4$ & $2^5$ & $2^6$ & $2^7$ & $2^8$ \\
\hline
Average time [ms] & 0.44 & 0.61 & 0.90 & 1.48 & 2.26 \\
\hline
\end{tabular}
\caption{Average runtime of algorithms}
\label{tab3}
\end{center}
\end{table}

We now consider the groups of order $2^9$ stepwise.

\begin{enumerate}
\item[$\bullet$]
Groups that are identified up to Step (7): Each of the $10\,483\,997$
groups was identified once with an average runtime of $10.51ms$ per group.
\item[$\bullet$]
Remaining groups that are identified up to Step (9): Each of $9\,035$
groups was identified once with an average runtime of $48.52ms$ per group.
\item[$\bullet$]
Remaining groups that are identified in Step (10): Each of the $281$ 
groups was identified $100$ times
with an average runtime of $57.13ms$ per group.
\end{enumerate}

\section{Questions and open problems}
\label{quest}

Some interesting problems remain unsolved. We discuss some of them.
\bigskip

\begin{qu}
Let $p$ be a prime. 
\begin{enumerate}
\item[\rm (a)]
Are there (strong) sibling $l$-tuples of arbitrary length $l$ among the 
finite $p$-groups?
\item[\rm (b)]
Are there estimates for the number of (strong) sibling $l$-tuples among 
the groups of order $p^n$?
\end{enumerate}
\end{qu}
\bigskip

\begin{qu}
Which invariants are effective in distinguishing the groups in (strong) 
sibling $l$-tuples?
\end{qu}
\bigskip

We note that Lewis and Wilson \cite{LW12} investigate a similar problem
for families of finite $p$-groups for odd $p$.

\appendix

\section{Table of strong sibling pairs of order $2^9$}
\label{app}

We exhibit the $56$ strong sibling pairs of order $2^9$. Each row in the
Tables \ref{tab4} and \ref{tab5} corresponds to one such pair $(G,H) = 
(SG(2^9, x), SG(2^9,y))$. The first entry in the row is $(x,y)$ 
describing $G$ and $H$. The table includes the rank and the $p$-class of 
$G$ and $H$, the result of the isoclinism test, their automorphism group 
order $|Aut(G)| = |Aut(H)|$, the result of the outer automorphism equivalence
test $Out(G) \cong Out(H)$ and the result of the MIP-test 
$\F_2 G \cong \F_2 H$.

\begin{table}[h!tb] 
\begin{center}
\begin{tabular}{|c|c|c|c|c|c|c|}
\hline
Pair & Rank & $p$-class & Isoclinic & Aut-order & Out-equiv & MIP \\
\hline
(1364, 1368) & 2 & 5 & no & 8192 & yes & no \\
(1365, 1369) & 2 & 5 & no & 8192 & yes & no \\
(1366, 1370) & 2 & 5 & no & 8192 & yes & no \\
(1367, 1371) & 2 & 5 & no & 8192 & yes & no \\
(1680, 1681) & 2 & 5 & yes & 16384 & yes & no \\
(1682, 1683) & 2 & 5 & yes & 16384 & yes & no \\
(1746, 1747) & 2 & 6 & yes & 4096 & yes & no \\
(1748, 1749) & 2 & 6 & yes & 4096 & yes & no \\
(1750, 1751) & 2 & 6 & yes & 8192 & yes & no \\
(1752, 1753) & 2 & 6 & yes & 8192 & yes & no \\
(15676, 15677) & 3 & 3 & yes & 65536 & no & no \\
(15681, 15682) & 3 & 3 & yes & 131072 & no & no \\
(15684, 15685) & 3 & 3 & yes & 131072 & no & no \\
(15826, 15827) & 3 & 3 & yes & 65536 & no & no \\
(27832, 27833) & 3 & 3 & yes & 65536 & no & no \\
(27885, 27886) & 3 & 3 & yes & 65536 & no & no \\
(28340, 28341) & 3 & 3 & yes & 32768 & yes & no \\
(28426, 28427) & 3 & 3 & yes & 32768 & yes & no \\
(28907, 28908) & 3 & 3 & yes & 65536 & no & no \\
(28909, 28910) & 3 & 3 & yes & 65536 & no & no \\
(28911, 28912) & 3 & 3 & yes & 65536 & no & no \\
(28913, 28914) & 3 & 3 & yes & 65536 & no & no \\
(29033, 29034) & 3 & 3 & yes & 32768 & yes & no \\
(29035, 29036) & 3 & 3 & yes & 32768 & yes & no \\
(30917, 30923) & 3 & 3 & yes & 131072 & yes & no \\
(30981, 30983) & 3 & 3 & yes & 131072 & yes & no \\
(32666, 32667) & 3 & 4 & yes & 16384 & yes & no \\
(35554, 35555) & 3 & 4 & yes & 32768 & no & no \\
\hline
\end{tabular}
\caption{Strong sibling pairs of order $2^9$, Part I}
\label{tab4}
\end{center}
\end{table}

\begin{table}[h!tb]
\begin{center}
\begin{tabular}{|c|c|c|c|c|c|c|}
\hline
Pair & Rank & $p$-class & Isoclinic & Aut-order & Out-equiv & MIP \\
\hline
(35556, 35557) & 3 & 4 & yes & 32768 & no & no \\
(35582, 35583) & 3 & 4 & yes & 32768 & no & no \\
(35584, 35585) & 3 & 4 & yes & 32768 & no & no \\
(35663, 35664) & 3 & 4 & yes & 4096 & no & no \\
(42355, 42356) & 3 & 4 & yes & 65536 & no & no \\
(42399, 42400) & 3 & 4 & yes & 65536 & no & no \\
(42482, 42483) & 3 & 4 & yes & 16384 & yes & no \\
(42518, 42519) & 3 & 4 & yes & 32768 & yes & no \\
(42520, 42521) & 3 & 4 & yes & 32768 & yes & no \\
(42577, 42578) & 3 & 4 & yes & 32768 & no & no \\
(42579, 42580) & 3 & 4 & yes & 32768 & no & no \\
(43208, 43209) & 3 & 4 & yes & 16384 & yes & no \\
(43261, 43262) & 3 & 4 & yes & 16384 & yes & no \\
(45383, 45384) & 3 & 4 & yes & 16384 & yes & no \\
(53282, 53284) & 3 & 4 & yes & 8192 & yes & no \\
(53283, 53285) & 3 & 4 & yes & 8192 & yes & no \\
(53300, 53302) & 3 & 4 & yes & 32768 & no & no \\
(53301, 53303) & 3 & 4 & yes & 32768 & no & no \\
(53306, 53307) & 3 & 4 & yes & 16384 & yes & no \\
(53382, 53384) & 3 & 4 & yes & 8192 & yes & no \\
(53383, 53385) & 3 & 4 & yes & 8192 & yes & no \\
(53396, 53398) & 3 & 4 & yes & 8192 & yes & no \\
(53397, 53399) & 3 & 4 & yes & 8192 & yes & no \\
(53484, 53485) & 3 & 4 & yes & 4096 & yes & no \\
(55648, 55649) & 3 & 4 & yes & 65536 & no & no \\
(139350, 139355) & 4 & 3 & yes & 4096 & yes & no \\
(253429, 253432) & 4 & 3 & yes & 16384 & yes & no \\
(253447, 253450) & 4 & 3 & yes & 16384 & yes & no \\
\hline
\end{tabular}
\caption{Strong sibling pairs of order $2^9$, Part II}
\label{tab5}
\end{center}
\end{table}

\bibliography{refs}
\bibliographystyle{plain}

\end{document}